\newtheorem{definicio}{Definíció}[section]
\newtheorem{theorem}[definicio]{Theorem}
\newenvironment{proof}{\par\noindent{\itshape Proof.}\ }{\rule{1ex}{1ex}}
\def\egyfill{$\m@th\hbox{\raisebox{-1.5pt}[1.5pt][0pt]{$\mathord\leftarrow\!$}}
\mkern-5mu
\cleaders\hbox{\raisebox{-1.5pt}[1.5pt][0pt]{$\!\mathord-\!$}}\hfill
\mkern-5mu
\mathord{\hbox{\raisebox{-1.5pt}[1.5pt][0pt]{$\!\mathord\rightarrow$}}}$}
\def\egy { \overset{\hbox{\egyfill}} }
\author{Zoltán Szilasi}
\title{A note on the converse Bricard property of projective planes}
\date{}
\begin{document}

\maketitle

\Footnote{ }{\emph{Mathematics Subject Classification:} 51A20; 51A30; 51E15; 51A05}
\Footnote{ }{\emph{Key words and phrases:} projective planes; Moufang planes; Bricard property; alternative division rings; octonions.}
\Footnote{ }{
\textsc{Zoltán Szilasi, Institute of Mathematics, University of Debrecen\\ H-4010, Debrecen, Hungary}\\
\textit{E-mail}: szilasi.zoltan@science.unideb.hu}

\begin{abstract}
We show that the converse Bricard property does not hold in every Moufang plane.
\end{abstract}

\section{Introduction}

An incidence geometry is a \emph{projective plane} if
\begin{itemize}
\item[(P1)] for every pair of distinct points $A$ and $B$ there is a unique line incident with $A$ and $B$ (we denote this line by $\egy{AB}$);
\item[(P2)] for every pair of distinct lines $m$ and $n$ there is a unique point incident with $m$ and $m$ (we denote this point by $m\cap n$);
\item[(P3)] there are four points no three of which are collinear.
\end{itemize}

In a projective plane an ordered triple of noncollinear points is a \emph{triangle}. Then the points are called the \emph{vertices}, and the lines joining the three possible distinct pairs of vertices are called \emph{sides}.\\

We say that two triangles $ABC$ and $A'B'C'$ are \emph{centrally perspective} from a point $O$ if the lines $\egy{AA'}$, $\egy{BB'}$ and $\egy{CC'}$ are incident with $O$. The triangles are called \emph{axially perspective} from a line $l$ if the points $\egy{AB}\cap\egy{A'B'}$, $\egy{AC}\cap\egy{A'C'}$ and $\egy{BC}\cap\egy{B'C'}$ are incident with $l$.\\

We consider the following incidence properties of projective planes.\\

\begin{itemize}
\item[(D11)] If two triangles are perspective from a point, then they are perspective from a line.
\item[(D10)] If the triangles $A_1B_1C_1$ and $A_2B_2C_2$ are perspective from a point $O$, and $O$ is incident to the line of $\egy{A_1B_1}\cap\egy{A_2B_2}$ and $\egy{A_1C_1}\cap\egy{A_2C_2}$, then they are perspective from a line.
\item[(D9)] If the triangles $A_1B_1C_1$ and $A_2B_2C_2$ are perspective from a point $O$, and the triplets $(A_1, B_2, C_1)$ and $(A_2, B_1, C_2)$ are collinear, then the two triangles are perspective from a line.
\end{itemize}

(D11) is called the \emph{Desargues property} and (D10) is called the \emph{little Desargues property}. A projective plane is called \emph{Desarguesian}, if (D11) holds; and a \emph{Moufang plane}, if (D10) holds. It is easy to see that (D11) is a stronger property, than (D10); and (D10) is stronger, than (D9). It can be shown (see \cite{Hey}) that if the \emph{Fano property} holds, i.e., no complete quadrangle has collinear diagonal points, then (D10) follows from (D9).\\

In \cite{AL} the following properties are investigated:

\begin{itemize}
\item \emph{The Bricard property:} Let $ABC$ and $A'B'C'$ be two triangles, and let $P:=\egy{BC}\cap\egy{B'C'}$, $Q:=\egy{AC}\cap\egy{A'C'}$ and $R:=\egy{AB}\cap\egy{A'B'}$. If $\egy{A'P}$, $\egy{B'Q}$ and $\egy{C'R}$ are concurrent, then $D:=\egy{BC}\cap\egy{AA'}$, $E:=\egy{AC}\cap\egy{BB'}$ and $F:=\egy{AB}\cap\egy{CC'}$ are collinear.
\item \emph{The converse Bricard property:} Let $ABC$ and $A'B'C'$ be two triangles, and let $P:=\egy{BC}\cap\egy{B'C'}$, $Q:=\egy{AC}\cap\egy{A'C'}$ and $R:=\egy{AB}\cap\egy{A'B'}$. If $D:=\egy{BC}\cap\egy{AA'}$, $E:=\egy{AC}\cap\egy{BB'}$ and $F:=\egy{AB}\cap\egy{CC'}$ are collinear, then $\egy{A'P}$, $\egy{B'Q}$ and $\egy{C'R}$ are concurrent.
\end{itemize}

\begin{figure}[ht]
	\centering
			\includegraphics[scale=0.4]{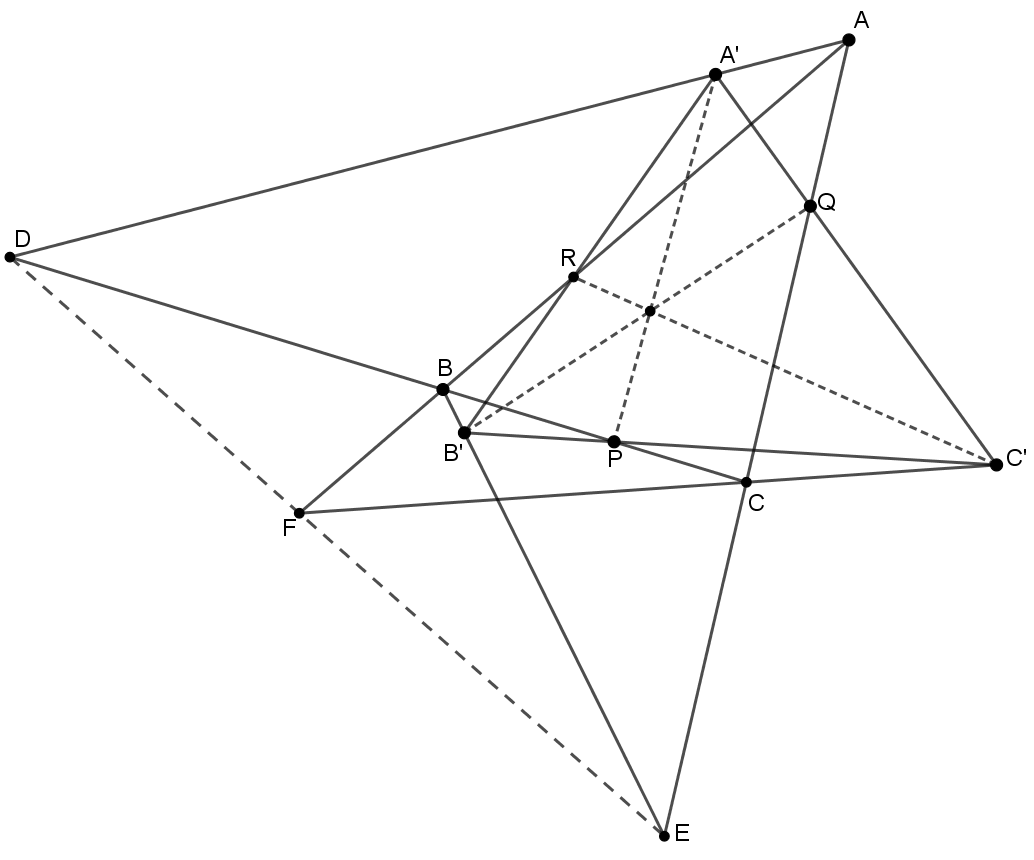}
			\caption{The Bricard property}
\end{figure}

It is shown in \cite{AL} that in every Desarguesian projective plane both of these properties are true. It is natural to ask, if (D11) is necessary for the Bricard property, or does it follow from one of the weaker conditions. The author of \cite{AL} conjectures that the Bricard property and its converse are equivalent to (D9).

In this paper we show that it is not true: we prove that it does not even follow from (D10), as we provide a counterexample for the converse Bricard property in a Moufang plane.

In \cite{AL} many other interesting open problems are mentioned about the Bricard propety. One of these questions is to determine dual of the Bricard property. In this paper we show that the dual of the Bricard property is its converse.

\section{Preliminaries}

Let $\mathcal{R}$ be a set and $+$, $\cdot$ be binary operations on $\mathcal{R}$ such that
\begin{itemize}
\item $(\mathcal{R},+)$ is a commutative group with zero element $0$;
\item $a\cdot 0=0\cdot a=0$ for all $a\in \mathcal{R}$;
\item $(\mathcal{R}\backslash\left\{0\right\},\cdot)$ is a loop;
\item $a\cdot(b+c)=a\cdot b+a\cdot c$ $(a,b,c\in \mathcal{R})$;
\item $(a+b)\cdot c=a\cdot c+b\cdot c$ $(a,b,c\in \mathcal{R})$;
\item $a\cdot(a\cdot b)=(a\cdot a)\cdot b$ $(a,b\in \mathcal{R})$;
\item $a\cdot(b\cdot b)=(a\cdot b)\cdot b$ $(a,b\in \mathcal{R})$.
\end{itemize}
Then $(\mathcal{R},+,\cdot)$ is called an \emph{alternative division ring}. In the following we will write simply $ab$ instead of $a\cdot b$. We denote the unit of $(\mathcal{R}\backslash\left\{0\right\},\cdot)$ by $1$. In every alternative division ring for all $a\in \mathcal{R}\backslash\left\{0\right\}$ there are $a',a''\in \mathcal{R}$ such that $aa'=1$, $a''a=1$, and $a'=a''$. This element is called the inverse of $a$ and is denoted by $a^{-1}$.

By a difficult theorem of Bruck-Kleinfield and Skornyakov, an alternative division ring either is associative or is a Cayley-Dickson algebra over some field. From this it follows that in every alternative division ring we have the \emph{inverse property}
\begin{center}
$a(a^{-1}b)=(ba^{-1})a=b$ for all $a\in\mathcal{R}\backslash\left\{0\right\} , b\in\mathcal{R}$,
\end{center}
since this holds in every Cayley-Dickson algebra.\\

The incidence structure $(\mathcal{P},\mathcal{L},\mathcal{I})$, where
\begin{itemize}
\item $\mathcal{P}:=\left\{[x,y,1],[1,x,0],[0,1,0]\textrm{ }\vert\textrm{ }x,y\in \mathcal{R}\right\}$;
\item $\mathcal{L}:=\left\{\left\langle a,1,b\right\rangle,\left\langle 1,0,a\right\rangle,\left\langle 0,0,1\right\rangle\textrm{ }\vert\textrm{ }a,b\in \mathcal{R}\right\}$;
\item $([x,y,z],\left\langle a,b,c\right\rangle)\in\mathcal{I}$ if and only if $xa+yb+zc=0$
\end{itemize}
is a projective plane called \emph{the projective plane over the alternative division ring} $\mathcal{R}$.\\

A projective plane is a Moufang plane if and only if it can be coordinatized by an alternative division ring, i.e., it is isomorphic to a projective plane over an alternative division ring. (For a proof see \cite{HP} or \cite{Stev}.)\\

A Moufang plane is Desarguesian if and only if the coordinatizing alternative division ring $\mathcal{R}$ is associative, i.e., $a(bc)=(ab)c$ for all $a,b,c\in \mathcal{R}$, and hence $\mathcal{R}$ is a skewfield.


The most simple example of an alternative division ring that is not a skewfield is the alternative division ring of \emph{octonions}. They can be constructed by the Cayley-Dickson procedure from the quaternions.
An octonion can be written in form $$x=x_0+x_1i+x_2j+x_3k+x_4l+x_5I+x_6J+x_7K,$$ where $x_i$ $\left(i\in\left\{0,1,2,3,4,5,6,7\right\}\right)$ are real numbers, and the rule of multiplication is given by the the following table:

\begin{center}
\begin{tabular}{c||c|c|c|c|c|c|c}

& i & j & k & l &I &J &K   \\
\hline
\hline
i& -1 & l & K & -j &J &-I &-k   \\
\hline
j& -l & -1 & I & i &-k &K &-J   \\
\hline
k& -K & -I & -1 & J &j &-l &i   \\
\hline
l& j & -i & -J & -1 &K &k &-I   \\
\hline
I& -J & k & -j & -K &-1 &i &l   \\
\hline
J& I & -K & l & -k &-i &-1 &j   \\
\hline
K& k & J & -i & I &-l &-j &-1   \\

\end{tabular}
\end{center}

The projective plane over the octonions is called the \emph{octonion plane}.

\section{The dual of the Bricard property}

\begin{theorem}
The dual of the Bricard property is its converse.
\end{theorem}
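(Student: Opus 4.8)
The approach is to write out the Bricard property purely in terms of its incidence hypotheses and conclusion, then apply the principle of duality systematically: replace "point" by "line", "line" by "point", "collinear" by "concurrent", "the line joining $X$ and $Y$" by "the point $X\cap Y$", and "the point $m\cap n$" by "the line joining $m$ and $n$". The key bookkeeping observation is that a triangle, being an ordered triple of noncollinear points together with its three sides, is a self-dual configuration: under duality the three vertices become three lines (the dual sides) and the three sides become three points (the dual vertices), so the dual of a triangle is again a triangle. Thus the dual of a statement about two triangles $ABC$, $A'B'C'$ is again a statement about two triangles; I will name the dual triangles so that the dual of side $\egy{BC}$ of the first triangle is the vertex of the dual first triangle opposite to the dual of vertex $A$, and so on.

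First I would set up the translation dictionary explicitly for the configuration. Fix triangles $ABC$ and $A'B'C'$; their sides are $\egy{BC},\egy{CA},\egy{AB}$ and $\egy{B'C'},\egy{C'A'},\egy{A'B'}$. Under duality let the dual of the first triangle have sides $a,b,c$ (images of $A,B,C$) and vertices $\alpha=b\cap c$, $\beta=c\cap a$, $\gamma=a\cap b$ (images of $\egy{BC},\egy{CA},\egy{AB}$ respectively), and similarly $a',b',c'$ and $\alpha',\beta',\gamma'$ for the dual of the second triangle. Then the point $P=\egy{BC}\cap\egy{B'C'}$ dualizes to the line $\egy{\alpha\alpha'}$ joining the two corresponding dual vertices; similarly $Q\mapsto\egy{\beta\beta'}$ and $R\mapsto\egy{\gamma\gamma'}$. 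The Bricard hypothesis "$\egy{A'P},\egy{B'Q},\egy{C'R}$ are concurrent" is about lines joining a vertex of the second triangle to one of $P,Q,R$; its dual is about the points obtained by intersecting a side of the dual second triangle with one of the dual lines $\egy{\alpha\alpha'},\egy{\beta\beta'},\egy{\gamma\gamma'}$, and "concurrent" dualizes to "collinear". The Bricard conclusion "$D=\egy{BC}\cap\egy{AA'}$, $E=\egy{CA}\cap\egy{BB'}$, $F=\egy{AB}\cap\egy{CC'}$ are collinear" dualizes to a statement that three lines — each joining a dual vertex $\alpha,\beta,\gamma$ to the dual point $a\cap a'$ etc. — are concurrent.

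The main work, and the only place where care is genuinely required, is to recognize the resulting dual statement as literally the converse Bricard property for the pair of triangles $(\alpha\beta\gamma,\alpha'\beta'\gamma')$. To do this I would, after performing the term-by-term dualization above, rename $\alpha\beta\gamma$ as a new triangle $ABC$ and $\alpha'\beta'\gamma'$ as $A'B'C'$ and check that: (i) the dual of $P=\egy{BC}\cap\egy{B'C'}$ becomes exactly the $P$ of the converse Bricard statement for the new triangles, and likewise for $Q,R$ — this uses that $\alpha$ is opposite side $a$, so $\egy{\alpha\alpha'}$ is the join of the two "$\egy{BC}$-type" vertices; (ii) the dual of the Bricard hypothesis becomes the collinearity of $D,E,F$ of the converse statement; and (iii) the dual of the Bricard conclusion becomes the concurrence of $\egy{A'P},\egy{B'Q},\egy{C'R}$. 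I expect (ii) to be the trickiest matching: I must verify that $\egy{A'P}$, a line through vertex $A'$ and point $P$, dualizes to the intersection of the side of the dual triangle opposite the dual of $A'$ with the dual of $P$, and that this intersection point is precisely $\egy{BC}\cap\egy{AA'}$ in the renamed configuration — i.e., track which side of the dual triangle the dual of vertex $A'$ "is". Once these three correspondences are confirmed, the theorem follows immediately, since duality turns every true incidence theorem into a true one and the dual of the dual is the original; in particular, the dual of the converse Bricard property is the Bricard property, so the two are dual to each other.
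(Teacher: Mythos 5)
Your overall strategy --- dualize the Bricard statement clause by clause and then relabel the resulting configuration as a pair of triangles --- is exactly the paper's. But the step you yourself flag as the only one requiring genuine care is misstated, and your checks (i)--(iii) would not go through as written. Check (i) cannot hold even in principle: the dual of the point $P=\egy{BC}\cap\egy{B'C'}$ is a \emph{line} (the join of $b\cap c$ and $b'\cap c'$), whereas the $P$ of the converse Bricard statement for the renamed triangles is a \emph{point}. The correct dictionary interchanges the roles of $P,Q,R$ with those of $\egy{AA'},\egy{BB'},\egy{CC'}$: the dual of the old $P$ is the new $\egy{AA'}$, while the new $P$ is the dual of the old line $\egy{AA'}$, namely $a\cap a'$. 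Consequently the dual of the old line $\egy{A'P}$ is the new point $D=\egy{BC}\cap\egy{AA'}$ and the dual of the old point $D$ is the new line $\egy{A'P}$; it is this crossing-over that makes hypothesis and conclusion trade places and turns Bricard into its converse.

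Your relabeling is also off: sending the dual of the first triangle to the new first triangle does not give a literal match. The dual of the Bricard hypothesis is the collinearity of $a'\cap p$, $b'\cap q$, $c'\cap r$, points lying on the sides $a',b',c'$ of the dual of the \emph{second} triangle, while the points $D,E,F$ in the converse Bricard statement lie on the sides of the \emph{first} triangle. So the new first triangle must be the one with sides $a',b',c'$; this is why the paper sets $A:=b'\cap c'$, $B:=a'\cap c'$, $C:=a'\cap b'$ and $A':=b\cap c$, $B':=a\cap c$, $C':=a\cap b$, i.e., swaps primed and unprimed. With your labeling you would obtain the converse Bricard property with the two triangles in the opposite order; since both properties are universally quantified over ordered pairs of triangles this still yields the theorem, but only after adding that remark, and your stated correspondences would fail on direct verification. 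The repair is a one-line change of labels, but as written the bookkeeping is wrong at exactly the decisive point.
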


\begin{figure}[ht]
	\centering
			\includegraphics[scale=0.4]{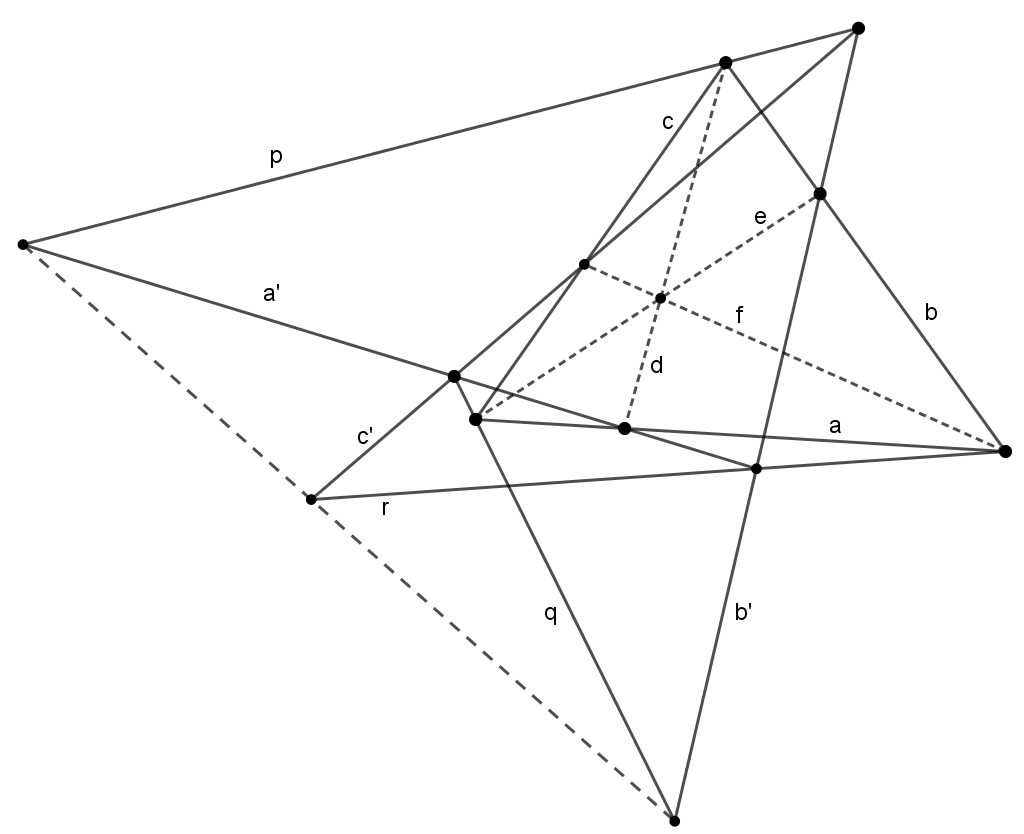}
			\caption{The dual of the Bricard property}
\end{figure}

\begin{proof}
The dual of the Bricard property can be formulated as follows:

Let $a$, $b$, $c$ and $a'$, $b'$, $c'$ be the sides of two triangles. Let $p$ be the line through $b\cap c$ and $b'\cap c'$, $q$ be the line through $a\cap c$ and $a'\cap c'$, $r$ be the line through $a\cap b$ and $a'\cap b'$. If $a'\cap p$, $b'\cap q$ and $c'\cap r$ are collinear; then the line $d$ through $b\cap c$ and $a\cap a'$, the line $e$ through $a\cap c$ and $b\cap b'$, and the line $f$ through $a\cap b$ and $c\cap c'$ are concurrent.\\

Use the following notations:

$A:= b'\cap c'$, $B:=a'\cap c'$, $C:=a'\cap b'$, $A':=b\cap c$, $B':=a\cap c$, $C':=a\cap b$.

Then the converse Bricard property applied to the triangles $ABC$ and $A'B'C'$ is the same as the dual of the Bricard property.
\end{proof}

\section{A counterexample for the converse Bricard property in the octonion plane}

\begin{theorem}
The converse Bricard property does not hold in every Moufang plane.
\end{theorem}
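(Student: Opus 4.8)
The plan is to exhibit an explicit counterexample in the octonion plane $\Pi$, which by Section 2 is a Moufang but non-Desarguesian projective plane. I would fix coordinates for the six vertices $A,B,C,A',B',C'$ from the normal forms $[x,y,1]$, $[1,x,0]$, $[0,1,0]$ listed for $\mathcal{P}$, choosing most of the entries to be $0$ or $1$ and leaving only a small number of genuine octonions (built from $i,j,k,l$, so that e.g. $i(jk)\neq(ij)k$) among the data. Every join $\egy{XY}$ and meet $m\cap n$ is then computed directly from the incidence relation $xa+yb+zc=0$, solving for the unknown line $\langle a,b,c\rangle$ or point $[x,y,z]$ in its appropriate normal form. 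This is the step where one must work carefully: in $\mathcal{R}$ one may use only the two Moufang-type identities and the inverse property $a(a^{-1}b)=(ba^{-1})a=b$, and may never silently reassociate a triple product.

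Next I would compute, in order, the six side lines $\egy{AB}$, $\egy{BC}$, $\egy{AC}$, $\egy{A'B'}$, $\egy{B'C'}$, $\egy{A'C'}$; the three points $P:=\egy{BC}\cap\egy{B'C'}$, $Q:=\egy{AC}\cap\egy{A'C'}$, $R:=\egy{AB}\cap\egy{A'B'}$; and the three points $D:=\egy{BC}\cap\egy{AA'}$, $E:=\egy{AC}\cap\egy{BB'}$, $F:=\egy{AB}\cap\egy{CC'}$. I would then tune the remaining free parameters so that $D$, $E$, $F$ are collinear --- either by arranging the data symmetrically enough that this holds automatically, or by solving the single scalar condition expressing $F\in\egy{DE}$. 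With the hypothesis of the converse Bricard property thus in force, it remains to test whether $\egy{A'P}$, $\egy{B'Q}$, $\egy{C'R}$ are concurrent: I would form $\egy{A'P}\cap\egy{B'Q}$ and show that $\egy{C'R}$ misses it, the obstruction coming down to an expression that is nonzero precisely because the chosen octonions fail to associate. Since $\Pi$ is Desarguesian if and only if $\mathcal{R}$ is associative, and since both Bricard properties hold in Desarguesian planes by \cite{AL}, this obstruction cannot be a mere artifact; checking that the analogous configuration over $\mathbb{R}$ is concurrent is a useful internal consistency test.

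The main obstacle is exactly this non-associative bookkeeping: brackets must be carried through every computation, and I would keep the octonionic part of the data as sparse and structured as possible so that the final product whose non-vanishing breaks concurrency is recognizable (something of the shape $u(vw)-(uv)w$). A secondary obstacle is non-degeneracy: I must verify that $A,B,C$ and $A',B',C'$ really are triangles (no three collinear), that the six side lines are pairwise distinct so that $P,Q,R,D,E,F$ are all defined, and that $\egy{A'P}$, $\egy{B'Q}$, $\egy{C'R}$ are three distinct lines so that ``not concurrent'' is a meaningful conclusion. I would secure these by keeping the rational part of the coordinates generic while the octonionic part stays rigid.

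Finally, as an alternative packaging, note that by the previous theorem the converse Bricard property is the dual of the Bricard property, and the dual of a Moufang plane is again Moufang (the dual of the octonion plane being the octonion plane over the opposite, hence by conjugation isomorphic, alternative division ring); so the same example could equally well be presented as a failure of the Bricard property itself in a Moufang plane.
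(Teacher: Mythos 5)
Your strategy is exactly the one the paper follows: work in the octonion plane, pick six explicit points with sparse octonionic entries, verify that $D$, $E$, $F$ are collinear, and then show by direct computation that $\egy{A'P}$, $\egy{B'Q}$, $\egy{C'R}$ fail to be concurrent, the failure being traceable to non-associativity. (The paper uses $A[1,0,0]$, $B[0,1,0]$, $C[0,0,1]$, $A'[i,-1,1]$, $B'[-1,j,1]$, $C'[k,-k,1]$, for which $D$, $E$, $F$ all lie on $x_1+x_2+x_3=0$ automatically, and the non-concurrency is detected by a nonzero real part of a single octonion expression.) Your closing remark that, via the duality theorem and the self-duality of the class of Moufang planes, the same example also refutes the Bricard property itself in some Moufang plane is a correct and worthwhile observation.

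However, as written the proposal has a genuine gap: no counterexample is actually produced or verified, and the theorem is an existence statement whose entire content is that such a configuration exists. The step where you argue that the obstruction ``cannot be a mere artifact'' because the octonion plane is non-Desarguesian is a non sequitur --- a non-Desarguesian plane could perfectly well satisfy the converse Bricard property for every configuration (indeed, whether (D9) or (D10) forces the Bricard properties is precisely the open question from \cite{AL} that the paper is answering), and non-associativity of three chosen elements does not by itself guarantee that the particular scalar you end up with is nonzero. Likewise, ``tuning the remaining free parameters so that $D$, $E$, $F$ are collinear'' is a promissory note, not an argument: one must check that the collinearity constraint can be satisfied simultaneously with all the non-degeneracy requirements and still leave the final concurrency test failing. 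Until the explicit coordinates are written down and the two computations (collinearity of $D,E,F$; non-concurrency of $\egy{A'P}$, $\egy{B'Q}$, $\egy{C'R}$) are carried out with all brackets in place, the proof is a plan rather than a proof.
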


\begin{proof}
Consider the following $ABC$ and $A'B'C'$ triangles in the octonion plane:\\
$A[1,0,0]$, $B[0,1,0]$, $C[0,0,1]$; $A'[i,-1,1]$, $B'[-1,j,1]$, $C'[k,-k,1]$.\\

First, we show that the points $D:=\egy{BC}\cap\egy{AA'}$, $E:=\egy{AC}\cap\egy{BB'}$ and $F:=\egy{AB}\cap\egy{CC'}$ are collinear.\\

The equation of $\egy{BC}$ is $x_1=0$ and the equation of $\egy{AA'}$ is $x_2+x_3=0$. Therefore $D=[0,-1,1]$.

The equation of $\egy{AC}$ is $x_2=0$ and the equation of $\egy{BB'}$ is $x_1+x_3=0$. Therefore $E=[-1,0,1]$.

The equation of $\egy{AB}$ is $x_3=0$ and the equation of $\egy{CC'}$ is $x_1+x_2=0$. Therefore $F=[1,-1,0]$.

Here, $D$, $E$ and $F$ are incident to the line $x_1+x_2+x_3=0$, therefore they are collinear.\\

We are going to show that $\egy{A'P}$, $\egy{B'Q}$ and $\egy{C'R}$ are not concurrent.\\

First, we calculate the coordinates of $P:=\egy{BC}\cap\egy{B'C'}$.

The coordinates of $\egy{B'C'}$ are $\left\langle e_1,1,e_2\right\rangle$ such that
$$\left.\begin{aligned}-e_1+j+e_2=0\\ ke_1-k+e_2=0 \end{aligned}\right\}.$$ From these equations $$-e_1+j=k(e_1-1),$$ therefore $$e_1=(k+1)^{-1}(k+j)=\left(\frac{1}{2}-\frac{1}{2}k\right)(k+j)=\frac{1}{2}+\frac{1}{2}j+\frac{1}{2}k+\frac{1}{2}I,$$ $$e_2=e_1-j=\frac{1}{2}-\frac{1}{2}j+\frac{1}{2}k+\frac{1}{2}I.$$ $P$ is the intersection of this line and $x_1=0$, so the coordinates are $$P\left[0,-\frac{1}{2}+\frac{1}{2}j-\frac{1}{2}k-\frac{1}{2}I,1\right].$$

Next, we calculate the coordinates of $Q:=\egy{AC}\cap\egy{A'C'}$.

The coordinates of $\egy{A'C'}$ are $\left\langle e_1,1,e_2\right\rangle$ such that
$$\left.\begin{aligned}ie_1-1+e_2=0\\ ke_1-k+e_2=0 \end{aligned}\right\}.$$ From these equations $$ie_1-1=k(e_1-1),$$ therefore $$e_1=(i-k)^{-1}(1-k)=\left(-\frac{1}{2}i-\frac{1}{2}k\right)(1-k)=-\frac{1}{2}i-\frac{1}{2}k+\frac{1}{2}K-\frac{1}{2},$$ $$e_2=1-ie_1=\frac{1}{2}+\frac{1}{2}i+\frac{1}{2}k+\frac{1}{2}K.$$ $Q$ is the intersection of this line and $x_2=0$. For $Q[x_1,0,1]$, $$x_1\left(-\frac{1}{2}i-\frac{1}{2}k+\frac{1}{2}K-\frac{1}{2}\right)+\frac{1}{2}+\frac{1}{2}i+\frac{1}{2}k+\frac{1}{2}K=0,$$ so $$x_1=-\left(\frac{1}{2}+\frac{1}{2}i+\frac{1}{2}k+\frac{1}{2}K\right)\left(-\frac{1}{2}i-\frac{1}{2}k+\frac{1}{2}K-\frac{1}{2}\right)^{-1}=$$ $$=-\left(\frac{1}{2}+\frac{1}{2}i+\frac{1}{2}k+\frac{1}{2}K\right)\left(-\frac{1}{2}+\frac{1}{2}i+\frac{1}{2}k-\frac{1}{2}K\right)=\frac{1}{2}+\frac{1}{2}i-\frac{1}{2}k+\frac{1}{2}K.$$

Therefore the coordinates are $$Q\left[\frac{1}{2}+\frac{1}{2}i-\frac{1}{2}k+\frac{1}{2}K,0,1\right].$$

Now we calculate the coordinates of $R:=\egy{AB}\cap\egy{A'B'}$.

The coordinates of $\egy{A'B'}$ are $\left\langle e_1,1,e_2\right\rangle$ such that
$$\left.\begin{aligned}ie_1-1+e_2=0\\ -e_1+j+e_2=0 \end{aligned}\right\}.$$ From these equations $$ie_1-1=-e_1+j,$$ therefore $$e_1=(i+1)^{-1}(1+j)=\left(\frac{1}{2}-\frac{1}{2}i\right)(1+j)=\frac{1}{2}-\frac{1}{2}i+\frac{1}{2}j-\frac{1}{2}l,$$ $$e_2=e_1-j=\frac{1}{2}-\frac{1}{2}i-\frac{1}{2}j-\frac{1}{2}l.$$ $R$ is the intersection of this line and $x_3=0$, so the coordinates are $$R\left[1,-\frac{1}{2}+\frac{1}{2}i-\frac{1}{2}j-\frac{1}{2}l,0\right].$$

Following similar method we calculate the coordinates of the line $\egy{A'P}$ through $A'[i,-1,1]$ and $P\left[0,-\frac{1}{2}+\frac{1}{2}j-\frac{1}{2}k-\frac{1}{2}I,1\right]$, and we obtain $$\egy{A'P}\left\langle\frac{1}{2}i+\frac{1}{2}l-\frac{1}{2}J-\frac{1}{2}K,1,\frac{1}{2}-\frac{1}{2}j+\frac{1}{2}k+\frac{1}{2}I \right\rangle.$$ The coordinates of the line $\egy{B'Q}$ through $B'[-1,j,1]$ and $Q\left[\frac{1}{2}+\frac{1}{2}i-\frac{1}{2}k+\frac{1}{2}K,0,1\right]$ are $$\egy{B'Q}\left\langle\frac{1}{2}j-\frac{1}{6}l-\frac{1}{6}I-\frac{1}{6}J,1,-\frac{1}{2}j-\frac{1}{6}l+\frac{1}{6}I-\frac{1}{6}J \right\rangle;$$ and the  the line $\egy{C'R}$ through $C'[k,-k,1]$ and $R\left[1,-\frac{1}{2}+\frac{1}{2}i-\frac{1}{2}j-\frac{1}{2}l,0\right]$ is $$\egy{C'R}\left\langle\frac{1}{2}-\frac{1}{2}i+\frac{1}{2}j+\frac{1}{2}l,1,\frac{1}{2}k+\frac{1}{2}I-\frac{1}{2}J-\frac{1}{2}K \right\rangle.$$

We have to check if $\egy{A'P}$, $\egy{B'Q}$ and $\egy{C'R}$ are concurrent. We calculate the intersection of $\egy{A'P}$ and $\egy{C'R}$. This point has coordinates $[x_1,x_2,1]$ such that 
$$\left.\begin{aligned}x_1\left(\frac{1}{2}i+\frac{1}{2}l-\frac{1}{2}J-\frac{1}{2}K\right)+x_2+\frac{1}{2}-\frac{1}{2}j+\frac{1}{2}k+\frac{1}{2}I=0\\ x_1\left(\frac{1}{2}-\frac{1}{2}i+\frac{1}{2}j+\frac{1}{2}l\right)+x_2+\frac{1}{2}k+\frac{1}{2}I-\frac{1}{2}J-\frac{1}{2}K=0 \end{aligned}\right\}.$$ From these equations $$x_1\left(\frac{1}{2}i+\frac{1}{2}j+\frac{1}{2}J+\frac{1}{2}K\right)-\frac{1}{2}+\frac{1}{2}j-\frac{1}{2}J-\frac{1}{2}K=0,$$ $$x_1=\left(\frac{1}{2}-\frac{1}{2}j+\frac{1}{2}J+\frac{1}{2}K\right)\left(\frac{1}{2}i+\frac{1}{2}j+\frac{1}{2}J+\frac{1}{2}K\right)^{-1}=$$ $$=\left(\frac{1}{2}-\frac{1}{2}j+\frac{1}{2}J+\frac{1}{2}K\right)\left(\frac{1}{4}+\frac{1}{2}i-\frac{1}{4}j-\frac{1}{4}J-\frac{1}{4}K\right)=$$ $$=\frac{1}{4}+\frac{1}{4}i-\frac{1}{2}j+\frac{1}{4}k+\frac{1}{4}l+\frac{1}{4}I+\frac{1}{4}K;$$ $$x_2=-\frac{1}{2}+\frac{1}{2}j-\frac{1}{2}k-\frac{1}{2}I-x_1\left(\frac{1}{2}i+\frac{1}{2}l-\frac{1}{2}J-\frac{1}{2}K\right)=$$ $$=-\frac{1}{2}+\frac{1}{2}j-\frac{1}{2}k-\frac{1}{2}I-\left(\frac{1}{4}+\frac{1}{4}i-\frac{1}{2}j+\frac{1}{4}k+\frac{1}{4}l+\frac{1}{4}I+\frac{1}{4}K\right)\left(\frac{1}{2}i+\frac{1}{2}l-\frac{1}{2}J-\frac{1}{2}K\right)=$$ $$=-\frac{5}{8}+\frac{1}{4}i+\frac{1}{2}j-\frac{5}{8}k-\frac{1}{2}l-\frac{3}{4}I+\frac{1}{2}J+\frac{1}{8}K.$$

So $$\egy{A'P}\cap \egy{C'R}=\left[\frac{1}{4}+\frac{1}{4}i-\frac{1}{2}j+\frac{1}{4}k+\frac{1}{4}l+\frac{1}{4}I+\frac{1}{4}K,-\frac{5}{8}+\frac{1}{4}i+\frac{1}{2}j-\frac{5}{8}k-\frac{1}{2}l-\frac{3}{4}I+\frac{1}{2}J+\frac{1}{8}K,1\right].$$ Therefore $\egy{A'P}$, $\egy{B'Q}$ and $\egy{C'R}$ are concurrent if and only if this point lies on $\egy{B'Q}$, i.e., 
$$\left(\frac{1}{4}+\frac{1}{4}i-\frac{1}{2}j+\frac{1}{4}k+\frac{1}{4}l+\frac{1}{4}I+\frac{1}{4}K\right)\left(\frac{1}{2}j-\frac{1}{6}l-\frac{1}{6}I-\frac{1}{6}J\right)-\frac{5}{8}+$$ $$+\frac{1}{4}i+\frac{1}{2}j-\frac{5}{8}k-\frac{1}{2}l-\frac{3}{4}I+\frac{1}{2}J+\frac{1}{8}K-\frac{1}{2}j-\frac{1}{6}l+\frac{1}{6}I-\frac{1}{6}J=0.$$

It is enough to calculate the real part of the left side. It is $$-\left(\frac{1}{2}j\right)\left(\frac{1}{2}j\right)-\left(\frac{1}{4}l\right)\left(\frac{1}{6}l\right)-\left(\frac{1}{4}I\right)\left(\frac{1}{6}I-\frac{5}{8}\right)=\frac{1}{4}+\frac{1}{24}+\frac{1}{24}-\frac{5}{8}\neq 0.$$ Therefore the right side cannot be zero, so the three lines are not concurrent.

\end{proof}

\end{document}